\newtheorem{theorem}{Theorem}[section]
\newtheorem{cor}[theorem]{Corollary}
\newtheorem{prop}[theorem]{Proposition}
\theoremstyle{definition}
\newtheorem{example}[theorem]{Example}
\numberwithin{equation}{subsection}
\theoremstyle{plain}
\newtheorem{question}{Question}
\newtheorem{problem}{Problem}
\newtheorem{corollary}[theorem]{Corollary}
\numberwithin{equation}{section}
\begin{document}
\title{Linearity problem for non-abelian tensor products}
\author[V. G. Bardakov]{Valeriy G. Bardakov}
\author[A. V. Lavrenov]{Andrei V. Lavrenov}
\author[M. V. Neshchadim]{Mikhail V. Neshchadim}

\date{\today}
\address{Sobolev Institute of Mathematics, Novosibirsk 630090, Russia,}
\address{Novosibirsk State University, Novosibirsk 630090, Russia,}
\address{Novosibirsk State Agrarian University, Dobrolyubova street, 160,  Novosibirsk, 630039, Russia,}
\email{bardakov@math.nsc.ru}

\address{Saint-Petersburg State University, University Embankment, 7--9, Saint-Petersburg, 199034, Russia,}
\email{A.V.Lavrenov@gmail.com}

\address{Sobolev Institute of Mathematics and Novosibirsk State University, Novosibirsk 630090, Russia,}
\email{neshch@math.nsc.ru}

\subjclass[2010]{Primary 20E25; Secondary 20G20, 20E05}
\keywords{non-abelian tensor product; linear group; faithful linear representation}

\begin{abstract}
In this paper we give an example of a linear group such that its tensor square is not linear. Also, we formulate some sufficient conditions for the linearity
of non-abelian tensor products $G \otimes H$ and tensor squares $G \otimes G$. Using these results we prove that tensor squares of some groups with one relation and some knot groups are linear. We prove that the Peiffer square of a finitely generated linear group is linear. At the end we construct faithful linear representations for the non-abelian tensor square of a free group and free nilpotent group.
\end{abstract}
\maketitle

\section{Introduction}

R. Brown and J.-L. Loday \cite{BL, BL1}
introduced the non-abelian tensor product $G \otimes H$ for a pair of groups $G$ and $H$
 following  works of Miller \cite{Mil},  and Lue \cite{Lue}.
They showed that the third homotopy group of the suspension of
an Eilenberg-MacLane space $K(G, 1)$ satisfies
$$
\pi_3 S K(G, 1) \cong J_2 (G),
$$
where $J_2 (G)$ is the kernel of the derived map $\kappa : G \otimes G \longrightarrow G'$, $g \otimes h
\mapsto [g, h] = g^{-1} h^{-1} g h$. Hence there exists the short exact sequence
$$
0 \longrightarrow \pi_3 S K(G, 1) \longrightarrow G \otimes G  \longrightarrow G' \longrightarrow 1.
$$

Also, the non-abelian tensor product is used to describe the
third relative homotopy group of a triad as a non-abelian tensor product of the
second homotopy groups of appropriate subspaces. More specifically, let a $CW$-complex $X$ be the union $X = A \cup B$ of two path-connected $CW$-subspaces $A$ and $B$ whose intersection $C = A \cap B$ is path-connected. If the canonical
homomorphisms $\pi_1 (C) \longrightarrow \pi_1 (A)$, $\pi_1 (C) \longrightarrow \pi_1 (B)$ are surjective, then, according
to \cite{BL},
$$
\pi_3 (X, A, B) \cong \pi_2 (A, C) \otimes \pi_2 (B, C),
$$
where the groups $\pi_2 (A, C)$ and $\pi_2 (B, C)$ act on one another via $\pi_1 (C)$.

The investigation of the non-abelian tensor product from a group theoretical
point of view started with a paper by Brown, Johnson, and Robertson \cite{BJR}. They compute the non-abelian tensor square of all non-abelian
groups of order up to 30 using Tietze transformations.

One of the topics of research on the non-abelian tensor products has been to
determine which group properties are preserved by non-abelian tensor products. By
using homological arguments, Ellis \cite{E} showed that if $G$ and $H$ are finite groups,
then $G \otimes H$ is also finite.
Visscher \cite{V} proved that if $G$, $H$ are solvable
(nilpotent), then $G \otimes H$ is solvable (nilpotent) and gives a bound on the nilpotency class of $G \otimes H$.
In \cite{DLT} it was proved that the  tensor product of groups
of nilpotency class at most $n$ is a group of nilpotency class at most $n$, thereby
improving the bound given by Visscher. For other results in this direction see the survey of
Nakaoka \cite{N}.

In this paper we study the linearity problem for non-abelian tensor products.
Let $n$ be a positive integer and let $P$ be a field. A group $G$ is said to be {\it linear of degree $n$ over $P$} if it is isomorphic with a subgroup of $GL_n(P)$, the group of all $n \times n$ non-singular matrices over $P$ or, equivalently, if it is isomorphic with a group of invertible linear transformations of a vector space of dimension $n$ over $P$ (see \cite{Mal}).
We study the following

\begin{question}
Let $G$ and $H$ be linear groups. Are the groups $G \otimes H$, $G \otimes G$ linear?
\end{question}

We show that in  general  the answer is negative. More accurately, we prove that the tensor square  $SL_n(\mathbb{Q}) \otimes SL_n(\mathbb{Q})$ of the special linear group
$SL_n(\mathbb{Q})$ over the field of rational numbers is not linear for $n \geq 3$. On the other side we formulate some sufficient conditions under which  the groups $G \otimes H$,  $G \otimes G$ are linear. Using these conditions, we prove that the non-abelian tensor squares of some groups with one defining relation and groups of fibered   knots are linear. If $G$ is a finitely generated free group or finitely generated free nilpotent group, then we construct concrete faithful linear representations for $G \otimes G$.

The non-abelian tensor square $G \otimes G$ is connected to other group constructions: exterior tensor square  $G \wedge G$ and Peiffert square $G \bowtie G$. We prove that if $G$ is finitely generated, then $G \bowtie G$ is linear.

We note that the following problems are still open.

\begin{problem}
1) Let $G$ be a finitely generated linear group. Is the group $G \otimes G$ linear?

2) Let $G$ be a  linear group. Is the group $G \bowtie G$ linear?
\end{problem}

\medskip

\textbf{Acknowledgement.} The authors are grateful to  V.~Thomas, S.~O.~Ivanov,  V.~Petrushenko, A.~Yu.~Ol'shanskii, V.~A.~Roman'kov, M.~Singh  for  useful discussions. Also we thank to J.~O.~Button and O.~V.~Bryukhanov for examples of linear groups with non-linear abelization (see section 5). The present work was supported by  Russian Science Foundation (project No. 16-41-02006).

\section{Preliminaries}\label{section2}

In this paper  we shall use the following notations.  For elements $x$, $y$ in a group
$G$, the conjugation  of $x$ by $y$ is $x^y = y^{-1} x y$; and the commutator of $x$ and $y$ is
$[x, y] = x^{-1} x^y = x^{-1} y^{-1} x y$. We write $G'$ for the derived subgroup of $G$, $G^{ab}$ for the
abelianized group $G / G'$.

Recall the definition of the non-abelian tensor product $G \otimes H$ of groups $G$ and $H$ (see \cite{BL, BL1}).
This tensor product is defined for any pair of groups $G$ and $H$ where each one acts on the other (on the
right)
$$
G \times H \longrightarrow G, ~~(g, h) \mapsto g^h; ~~~H \times G \longrightarrow H, ~~(h, g) \mapsto h^g
$$
and on itself by conjugation, in such a way that for all $g, g_1 \in G$ and $h, h_1 \in H,$
$$
g^{(h^{g_1})} = \left( \left( g^{g_1^{-1}} \right)^h \right)^{g_1}~~
\mbox{and}~~
h^{(g^{h_1})} = \left( \left( h^{h_1^{-1}} \right)^g \right)^{h_1}.
$$
In this situation we say that $G$ and $H$ act {\it compatibly} on each other. The {\it non-abelian
tensor product} $G \otimes H$ is the group generated by all symbols $g \otimes h$, $g \in G$, $h \in H$,
subject to the relations
$$
g g_1 \otimes h = (g^{g_1} \otimes h^{g_1}) (g_1 \otimes h)~ \mbox{and}~~ g \otimes h h_1 = (g \otimes h_1) (g^{h_1} \otimes h^{h_1})
$$
for all $g, g_1 \in G$, $h, h_1 \in H$.

In particular, as the conjugation action of a group $G$ on itself is compatible, then
the tensor square $G \otimes G$ of a group $G$ may always be defined. Also, the tensor product $G \otimes H$ is defined if $G$ and $H$ are two normal subgroups of some group $M$ and actions are conjugations in $M$.

\medskip

Recall { \it the main diagram for the non-abelian tensor square } (see \cite{BL, BL1}).
Let $G$ be a group. One of the main tools for studying of the non-abelian tensor square $G \otimes G$ is the following diagram:

$$
\begin{array}{ccccccccc}
                 &             &                &                              & 0         &             & 0          &             &  \\
                 &             &                &                              &\downarrow &             & \downarrow &             &  \\
  H_3(G)         & \rightarrow & \Gamma(G^{ab}) & \stackrel{\psi}{\rightarrow} & J_2(G)    & \rightarrow & H_2(G)     & \rightarrow & 0\\
  \shortparallel &             & \shortparallel &                              &\downarrow &             & \downarrow &  &  \\
  H_3(G)         & \rightarrow & \Gamma(G^{ab}) & \stackrel{\psi}{\rightarrow} & G\otimes G& \rightarrow &G\wedge G& \rightarrow & 1\\
                 &             &                &                              &\downarrow &             & \downarrow &             &  \\
                 &             &                &                              & G'        &     =       & G'         &             &  \\
                 &             &                &                              &\downarrow &             & \downarrow &             &  \\
                 &             &                &                              & 1         &             & 1          &             &  \\
\end{array}
$$
with exact rows and columns.
Here

1) $H_2(G)$, $H_3(G)$ are the second and the third homology groups of $G$ with the coefficients  in the trivial $\mathbb{Z}G$-module
 $\mathbb{Z}$. The second homology group  $H_2(G)$
for the group $G=F/R$, where $F$ is a free group, can be find by the Hopf formula:
$$
H_2(G)\cong (F' \cap R)/[F,R].
$$

2) $G\wedge G$ is the exterior product of $G$ onto itself.
For the group $G=F/R$ it can be presented in the form (see \cite{BFM})
$$
G\wedge G \cong F'/[F,R].
$$
In particular, if $G$ is a free group, then $G\wedge G \cong G'$.

3) $J_2(G) = \pi_3 SK(G, 1)$ is the kernel of the derived map $\kappa : G \otimes G \longrightarrow G'$, which on the generators of
$G \otimes G$ is defined by the rule:
$$
g_1 \otimes g_2 \mapsto [g_1,g_2].
$$
The group $J_2(G)$  lies in the center $Z(G \otimes G)$
and its elements are invariant under the action of $G$ onto $G \otimes G$,
which is defined by the formula
$$
(g_1 \otimes g_2)^{g} =g_1^{g} \otimes g_2^{g}.
$$
 In particular, if $g_2 = g_1$, then
$$
(g_1 \otimes g_1)^{g} =g_1 \otimes g_1
$$
for any $g, g_1 \in G$.

4) $\Gamma(G^{ab})$ is Whitehead's quadratic functor.
The group $\Gamma(G^{ab})$ is generated by elements $\gamma(g G')$
and $\psi$ is defined by the formula
$$
\gamma(g G') \longmapsto g \otimes g.
$$

The image $\psi \Gamma(G^{ab})$ is not equal in the general case to the group
$J_2(G)$ since $J_2(G)/ \psi \Gamma(G^{ab}) \cong H_2(G)$;

For the functor  $\Gamma : A \longmapsto \Gamma (A)$, where $A$ is an abelian group it is known that

a) $\Gamma (A\times B)\cong \Gamma (A)\times\Gamma (B)\times (A\otimes_{\mathbb{Z}} B)$,
where $A\otimes_{\mathbb{Z}} B$ is the abelian tensor product of abelian groups;

b)
$\Gamma (\mathbb{Z}_n)\cong
\displaystyle\left\{
\begin{array}{ll}
  \mathbb{Z}_n    & n \equiv 1~ (\mbox{mod}~2),\\
  \mathbb{Z}_{2n} & n \equiv 0~ (\mbox{mod}~2).\\
\end{array}
\right.
$

c) $\Gamma (\mathbb{Z})\cong  \mathbb{Z}$.

In particular, $\Gamma (\mathbb{Z}^n)\cong  \mathbb{Z}^{\frac{n(n+1)}{2}}$.

\section{Linearity problem}

In this section we will use a result of Malcev \cite{Mal} (see also \cite[Chapter 2]{Weh}) on the linearity of abelian groups. To formulate it recall some definitions.
If $G$ is any group $\tau(G)$ is the subgroup of $G$ generated by all the
periodic normal subgroups of $G$; that is $\tau(G)$ is the maximum periodic
normal subgroup of $G$. $G$ has {\it finite rank at most} $n$ if every finite subset
of $G$ is contained in an $n$-generator subgroup of $G$. If $G$ is abelian and
periodic then $G$ has finite rank at most $n$ if and only if for each prime $p$
the Sylow $p$-subgroup of $G$ is a direct product of at most $n$ cyclic and
Pr\"{u}fer $p$-groups (a Pr\"{u}fer $p$-group is a $\mathbb{C}_{p^{\infty}}$-group). If $\pi$ is any set of
primes and $G$ is a group with a unique maximal $\pi$-subgroup we denote
this maximal $\pi$-subgroup by $G_{\pi}$.

Malcev proved:

i) An abelian group $A$ has a faithful representation of degree $n \geq 1$ over some field of characteristic zero if and only if
$\tau(G)$ has rank at most $n$.

ii) An abelian group $A$ has a faithful representation of degree $n \geq 1$ over
some field of characteristic $p > 0$ if and only if $\tau(G)_{p'}$ (here $p'$ denotes all primes except $p$) has finite rank $r$ and
$\tau (A)_p$ has finite exponent $p^e$ satisfying
$$
p^{e - 1} + \max \{ 1, r \} < n + 1.
$$

\medskip
We are ready to prove the following

\begin{prop}
There is a linear group $G$ such that $G \otimes G = G \wedge G$ is not linear.
\end{prop}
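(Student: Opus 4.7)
The natural candidate is $G = SL_n(\mathbb{Q})$ for $n \geq 3$, which is tautologically linear. The plan is to show that $G \otimes G$ coincides with $G \wedge G$, and that the latter contains the Schur multiplier $H_2(G) = K_2(\mathbb{Q})$ as a central subgroup; since $K_2(\mathbb{Q})$ violates the Malcev criterion stated above, $G \otimes G$ cannot be linear.

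First I would verify that $G \otimes G \cong G \wedge G$. For $n \geq 3$ the group $SL_n(\mathbb{Q})$ is perfect (every elementary transvection is a commutator of elementary transvections), so $G^{ab} = 1$ and therefore $\Gamma(G^{ab}) = 0$. The main diagram from Section \ref{section2} then forces the map $\psi \colon \Gamma(G^{ab}) \to G \otimes G$ to be trivial, and the natural surjection $G \otimes G \twoheadrightarrow G \wedge G$ becomes an isomorphism.

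Next I would exhibit the non-linear central subgroup. Writing $G = F/R$ with $F$ free, the Hopf formula gives $H_2(G) \cong (F' \cap R)/[F, R]$ sitting inside $G \wedge G \cong F'/[F, R]$, and this subgroup is central because $[F, F' \cap R] \subseteq [F, R]$. Matsumoto's theorem identifies $H_2(SL_n(\mathbb{Q})) \cong K_2(\mathbb{Q})$ for $n \geq 3$, and Tate's computation gives
$$K_2(\mathbb{Q}) \cong \mathbb{Z}/2 \oplus \bigoplus_{p \text{ odd prime}} \mathbb{F}_p^*.$$
This is a torsion abelian group whose $\ell$-primary part has infinite rank for every prime $\ell$: by Dirichlet's theorem there are infinitely many primes $p$ with $p \equiv 1 \pmod{\ell^k}$ for any fixed $k$, each contributing a cyclic factor of order at least $\ell^k$.

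It remains to apply the Malcev criterion. In characteristic zero, $\tau(K_2(\mathbb{Q})) = K_2(\mathbb{Q})$ does not have finite rank; in characteristic $\ell > 0$, the $\ell'$-component of $\tau(K_2(\mathbb{Q}))$ still contains the $q$-primary part for any prime $q \neq \ell$, which already has infinite rank. Hence $K_2(\mathbb{Q})$ admits no faithful linear representation over any field, and neither does the overgroup $G \otimes G$, since subgroups of linear groups are linear. The only non-routine inputs are Matsumoto's theorem and Tate's description of $K_2(\mathbb{Q})$; once these are in hand, the Malcev criterion delivers non-linearity immediately, and the main conceptual obstacle is simply recognizing that perfection of $G$ is what collapses $G \otimes G$ onto the universal central extension and drags the Schur multiplier inside as an obstruction.
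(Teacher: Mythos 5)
Your proposal is correct and follows essentially the same route as the paper: take $G=SL_n(\mathbb{Q})$ for $n\geq 3$, use perfectness to identify $G\otimes G$ with $G\wedge G$ and to place $H_2(G)\cong \mathrm{K}_2(\mathbb{Q})$ inside it as a central subgroup, then invoke Milnor--Tate's description of $\mathrm{K}_2(\mathbb{Q})$ and Malcev's criterion. Your treatment is slightly more careful than the paper's in checking the criterion separately in characteristic zero and in each positive characteristic, but the argument is the same.
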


\begin{proof}
For a perfect group $G=G'$ it follows from the main diagram (see section \ref{section2}) that $G\otimes G=G\wedge G$ and the sequence
$$
0 \longrightarrow  H_2(G,\,\mathbb Z) \longrightarrow G\otimes G \longrightarrow G \longrightarrow 0
$$
is exact.

For $n\geq 3$ the group $SL_n (\mathbb{Q})$ is perfect and its second homology group coincides with the $\mathrm K_2$-group of the field $\mathbb Q$,
$$
 H_2(SL_n (\mathbb{Q}),\,\mathbb Z)=\mathrm K_2(\mathbb Q),
$$
see~\cite[Corollary~11.2]{Milnor}.

Next,
$$
\mathrm K_2(\mathbb Q)=\{\pm1\}\times\prod\limits_{p\text{ odd prime}}(\mathbb Z/p)^\times
$$
by~\cite[Theorem~11.6]{Milnor}, so that  $\mathrm K_2(\mathbb Q)$ contains an abelian 2-group of infinite rank and unbounded exponent. Using Maltsev's criterion we conclude that such a group can not be linear. Therefore the group
$$
SL_n (\mathbb{Q}) \otimes SL_n (\mathbb{Q}) = SL_n (\mathbb{Q}) \wedge SL_n (\mathbb{Q})
$$
is not linear as well.
\end{proof}

\medskip

To study the linearity problem for the non-abelian tensor product we can use a presentation of
a tensor product as a central extension (see, for example, \cite{DLT}.) The {\it derivative subgroup} of $G$ by $H$ is defined to be the following subgroup
$$
D_H(G) = [G, H] = \langle g^{-1} g^h ~|~g\in G, h \in H \rangle.
$$
The map $\kappa : G \otimes H \longrightarrow D_H(G)$ defined by $\kappa (g \otimes h) = g^{-1} g^h$ is a homomorphism, its kernel is the central subgroup of
$G \otimes H$  and $G$ acts on $G \otimes H$ by the rule $(g \otimes h)^x = g^x \otimes h^x$, $x \in G$. There exists the short exact sequence
$$
1 \longrightarrow A \longrightarrow G \otimes H  \longrightarrow D_H (G) \longrightarrow 1.
$$
In this case $A$ can be viewed as a $\mathbb{Z} [D_H (G)]$-module via conjugation in $G \otimes H$, i. e. under the action induced by setting
$$
a \cdot g = x^{-1} a x,~~a \in A, x \in G \otimes H, \kappa(x) = g.
$$

We can formulate some sufficient conditions for the linearity of $G \otimes H$.
It is well known that the tensor product $G \otimes H$ with trivial actions is isomorphic to the abelian tensor product
 $G^{ab}\otimes_{\mathbb{Z}} H^{ab}$. Hence, in this case  the question on the linearity of $G \otimes H$ is equivalent to the question on the linearity of the abelian tensor product and the answer follows from the Malcev theorem.

Further we will assume that the action of $G$ on $H$ or the action of $H$ on $G$ is non-trivial.
We have  the following short exact sequence
$$
0 \longrightarrow A \longrightarrow G\otimes H \longrightarrow D_H(G) \longrightarrow 1.
\eqno{(1)}
$$
Note that  $A$ is the kernel of the natural map  $G\otimes H \longrightarrow D_H(G)$,
$g\otimes h \longrightarrow g^{-1} g^h$, $g \in G$, $h \in H$, and is a central subgroup of
 $G\otimes H$.

\begin{prop}
Let the following conditions hold

1) $A$, $D_H(G)$ are linear groups;

2) $H^2(D_H(G), A) = 0$, in particular, this condition holds if $A$ is divisible or $D_H(G)$ is a  free group.

Then  $G\otimes H = A \times D_H(G)$ is the direct product and is a linear group.

\end{prop}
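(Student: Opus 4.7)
The structural heart of the proof is the observation that the sequence~(1) presents $G \otimes H$ as a central extension of $D_H(G)$ by the abelian group $A$, since the preceding discussion establishes $A \subseteq Z(G\otimes H)$ (so $D_H(G)$ acts trivially on $A$). My plan is therefore to invoke the classical classification of central extensions of a group $Q$ by an abelian group with trivial action via $H^2(Q, A)$. Under the hypothesis $H^2(D_H(G), A) = 0$, the extension class of~(1) must be trivial, so~(1) splits; any split central extension is a direct product, which yields the asserted isomorphism $G \otimes H \cong A \times D_H(G)$.

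Once the direct-product decomposition is established, linearity is a routine block-diagonal construction. Given faithful representations $\rho_A \colon A \hookrightarrow GL_m(P)$ and $\rho_D \colon D_H(G) \hookrightarrow GL_n(P)$ over a common field $P$, the map $(a,d)\mapsto \operatorname{diag}\bigl(\rho_A(a),\rho_D(d)\bigr)$ realizes $A \times D_H(G)$ as a subgroup of $GL_{m+n}(P)$, giving the linearity of $G \otimes H$.

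For the two sufficient conditions in the ``in particular'' clause I would argue separately. The free-group case is immediate: a free group has cohomological dimension at most one, so $H^n(F, -)$ vanishes for every $n \ge 2$. For divisible $A$ the argument is that divisible abelian groups are injective as $\mathbb{Z}$-modules, which allows one to split any $2$-cocycle valued in $A$. The only mildly delicate point in the whole proof is ensuring that the faithful representations of $A$ and of $D_H(G)$ can be chosen over a common field $P$; if one factor were given in characteristic zero and the other in positive characteristic the block-diagonal trick would fail, but in the contexts where this proposition is applied such a common $P$ is automatic, so I regard this as a technicality rather than a substantive obstacle.
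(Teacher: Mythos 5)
Your argument is essentially identical to the paper's: the paper likewise observes that $A$ is central, invokes the vanishing of $H^2(D_H(G),A)$ to split the extension~(1), concludes $G\otimes H\cong A\times D_H(G)$ because a split extension with central kernel is a direct product, and then cites linearity of a direct product of linear groups. Your added remarks (the block-diagonal embedding and the need for a common ground field) only make explicit what the paper leaves implicit, and your justifications of the ``in particular'' clause match the paper's unproved assertions, so there is no substantive divergence to report.
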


\begin{proof}
It is well known that if $H^2(D_H(G), A) = 0$, then the sequence
$$
0 \longrightarrow A \longrightarrow G \otimes H \longrightarrow D_H(G) \longrightarrow 1,
$$
splits. In particular, this condition holds if $A$ is divisible or $D_H(G)$ is a free group.

Since $A$ is a central subgroup, then
$G\otimes H\cong A \times D_H(G)$ and is a linear group as a direct product of linear groups.

\end{proof}

The main problem in the use of this theorem is the description of the central subgroup $A$. For the tensor square we can use another approach.

Let us formulate some sufficient conditions under which
$G \otimes G$ is a direct product of the commutator subgroup  $G'$
and the Whitehead group  $\Gamma(G^{ab})$.

\begin{theorem}\label{t2}
Let  $H_2(G)=H_3(G)= H_2(G') = 0$ and  one of the following conditions hold

1) $H^2(G', \Gamma(G^{ab})) = 0$;

2) $\Gamma(G^{ab})$ is a divisible group;

3) $G' / G''$ is a free abelian group.

Then
$$
G \otimes G \cong   \Gamma(G^{ab}) \times G'.
$$
If moreover $G$ is finitely generated and $G'$ is linear, then $G \otimes G$ is linear.
\end{theorem}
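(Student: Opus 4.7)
The plan is to extract a central extension from the main diagram of Section~\ref{section2} and to split it using the standard universal coefficient analysis of $H^2(G',-)$.

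First, I would read off the top row of the main diagram:
\[
H_3(G)\longrightarrow\Gamma(G^{ab})\xrightarrow{\psi}J_2(G)\longrightarrow H_2(G)\longrightarrow 0.
\]
Since both $H_3(G)$ and $H_2(G)$ vanish, the map $\psi$ is an isomorphism $\Gamma(G^{ab})\cong J_2(G)$. Combining this with the middle column of the diagram,
\[
1\longrightarrow J_2(G)\longrightarrow G\otimes G\longrightarrow G'\longrightarrow 1,
\]
gives a central extension
\[
1\longrightarrow\Gamma(G^{ab})\longrightarrow G\otimes G\longrightarrow G'\longrightarrow 1, \tag{$\ast$}
\]
where centrality is guaranteed by the fact (recalled in Section~\ref{section2}) that $J_2(G)\subseteq Z(G\otimes G)$.

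The next step is to verify that each of the three alternative hypotheses forces $H^2(G',\Gamma(G^{ab}))=0$, so that $(\ast)$ splits. Condition~(1) is the hypothesis itself. For (2) and (3) I would invoke the universal coefficient short exact sequence
\[
0\longrightarrow\operatorname{Ext}^1(H_1(G'),\Gamma(G^{ab}))\longrightarrow H^2(G',\Gamma(G^{ab}))\longrightarrow\operatorname{Hom}(H_2(G'),\Gamma(G^{ab}))\longrightarrow 0,
\]
whose right-hand term already vanishes because $H_2(G')=0$. Under condition~(2), the coefficient group $\Gamma(G^{ab})$ is divisible, hence injective as an abelian group, so the $\operatorname{Ext}^1$-term vanishes. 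Under condition~(3), $H_1(G')=G'/G''$ is free abelian, so $\operatorname{Ext}^1(H_1(G'),-)=0$ for any coefficients. Either way, $H^2(G',\Gamma(G^{ab}))=0$ and $(\ast)$ splits; since the kernel is central, the splitting yields a direct product decomposition $G\otimes G\cong\Gamma(G^{ab})\times G'$.

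For the linearity conclusion, I would argue that if $G$ is finitely generated then $G^{ab}$ is a finitely generated abelian group, and by the computation rules for $\Gamma$ recalled in Section~\ref{section2} (the formulas $\Gamma(A\times B)\cong\Gamma(A)\times\Gamma(B)\times(A\otimes_{\mathbb Z}B)$, $\Gamma(\mathbb Z)\cong\mathbb Z$, $\Gamma(\mathbb Z_n)$ finite cyclic) the group $\Gamma(G^{ab})$ is again finitely generated abelian. Malcev's criterion stated in Section~3 then shows $\Gamma(G^{ab})$ is linear, and a direct product of two linear groups is linear, giving the result. I expect the cleanest and only mildly subtle point to be the extraction of $(\ast)$ from the main diagram (specifically, checking that the injectivity of $\Gamma(G^{ab})\hookrightarrow G\otimes G$ really uses $H_3(G)=0$ rather than just $H_2(G)=0$); the rest is routine homological algebra plus Malcev.
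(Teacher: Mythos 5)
Your proposal is correct and follows essentially the same route as the paper's proof: extract the central extension $0\to\Gamma(G^{ab})\to G\otimes G\to G'\to 1$ from the main diagram using $H_2(G)=H_3(G)=0$, kill $H^2(G',\Gamma(G^{ab}))$ via the universal coefficient sequence (where $H_2(G')=0$ disposes of the $\operatorname{Hom}$ term and divisibility or freeness of $H_1(G')$ disposes of the $\operatorname{Ext}$ term), and conclude with centrality plus Malcev. Your treatment of the divisible case is in fact slightly more careful than the paper's, which simply asserts the vanishing of $H^2$ there.
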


\begin{proof}
Since, $H_2(G)=H_3(G)=0$, then the main diagram has the form

$$
\begin{array}{ccccccccc}
    &             &                &                              & 0         &             & 0          &             &  \\
    &             &                &                              &\downarrow &             & \downarrow &             &  \\
  0 & \rightarrow & \Gamma(G^{ab}) & \stackrel{\psi}{\rightarrow} & J_2(G)    & \rightarrow & 0          & \rightarrow & 0\\
    &             & \shortparallel &                              &\downarrow &             & \downarrow &  &  \\
  0 & \rightarrow & \Gamma(G^{ab}) & \stackrel{\psi}{\rightarrow} & G\otimes G& \rightarrow &G\wedge G& \rightarrow & 1\\
    &             &                &                              &\downarrow &             & \downarrow &             &  \\
    &             &                &                              & G'        &     =       & G'         &             &  \\
    &             &                &                              &\downarrow &             & \downarrow &             &  \\
    &             &                &                              & 1         &             & 1          &             &  \\
\end{array}
$$

From this diagram  $J_2(G)= \Gamma(G^{ab})$ and $G\wedge G =G'$.
Hence we have  the short exact sequence
$$
0 \longrightarrow \Gamma(G^{ab}) \longrightarrow G\otimes G \longrightarrow G' \longrightarrow 1.
$$
If $H^2(G', \Gamma(G^{ab})) = 0$, then this sequence is  splittable:
$$
G\otimes G= \Gamma(G^{ab}) \leftthreetimes G'.
$$
As we know if $\Gamma(G^{ab})$ is divisible or $G'$ is free, then $H^2(G', \Gamma(G^{ab})) = 0$. Let us show that if $G' / G''$ does not have torsion, then $H^2(G', \Gamma(G^{ab})) = 0$.
Indeed, by the universal coefficient theorem there is the following short exact sequence
$$
0 \longrightarrow Ext_{\mathbb{Z}} (H_{1} (G'), \Gamma(G^{ab})) \longrightarrow H^2 (G', \Gamma(G^{ab})) \longrightarrow \mathrm{Hom}_{\mathbb{Z}} (H_{2} (G'), \Gamma(G^{ab})) \longrightarrow 0.
$$
Since  $H_2(G') = 0$  we have the short exact sequence
$$
0 \longrightarrow Ext_{\mathbb{Z}} (H_{1} (G'), \Gamma(G^{ab})) \longrightarrow H^2 (G', \Gamma(G^{ab})) \longrightarrow 0.
$$
Hence $H^2 (G', \Gamma(G^{ab})) = 0$ if and only if $Ext_{\mathbb{Z}} (H_{1} (G'), \Gamma(G^{ab})) = 0$. It is known that if $H_1(G')$ is free abelian, then
$Ext_{\mathbb{Z}} (H_{1} (G'), \Gamma(G^{ab})) = 0$.

Since $\Gamma(G^{ab})$ is a central subgroup, this product is the direct product:
$$
G\otimes G= \Gamma(G^{ab}) \times G'.
$$

If $G$ is a finitely generated, then $G^{ab}$ is finitely generated abelian group and $\Gamma(G^{ab})$ is also a finitely generated abelian group. Then $G \otimes G$ is linear as a direct product of two linear groups.
\end{proof}

As  consequence we get the following result

\begin{cor} \cite{BFM}
Let $F_n$ be a free group of rank  $n$.
Then
$$
F_n\otimes F_n \cong \mathbb{Z}^{n(n+1)/2} \times (F_n)'.
$$
\end{cor}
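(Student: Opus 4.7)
The plan is to apply Theorem~\ref{t2} to $G=F_n$ and then identify $\Gamma(G^{ab})$ explicitly. The verification splits cleanly into checking the hypotheses and computing the Whitehead quadratic functor on a free abelian group.

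First I would verify the three vanishing hypotheses. Since $F_n$ is a free group of rank $n$, it has cohomological dimension one, so $H_2(F_n)=H_3(F_n)=0$. Moreover, by the Nielsen--Schreier theorem the commutator subgroup $(F_n)'$ is itself free (of countable rank if $n\geq 2$), so $H_2((F_n)')=0$ as well. This handles the three homological conditions.

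Next I would verify one of the three alternative splitting hypotheses. The cleanest choice is condition~3): since $(F_n)'$ is free, its abelianization $(F_n)'/(F_n)''$ is free abelian. Alternatively, condition~1) is immediate because $(F_n)'$ being free already forces $H^2((F_n)',M)=0$ for any coefficient module $M$. Either way, Theorem~\ref{t2} applies and yields
\[
F_n \otimes F_n \;\cong\; \Gamma(F_n^{ab}) \times (F_n)'.
\]

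The final step is the identification $\Gamma(F_n^{ab})\cong \mathbb{Z}^{n(n+1)/2}$. Here $F_n^{ab}\cong \mathbb{Z}^n$, and from the properties of $\Gamma$ recorded in Section~\ref{section2} (namely $\Gamma(A\times B)\cong \Gamma(A)\times\Gamma(B)\times(A\otimes_{\mathbb{Z}} B)$ together with $\Gamma(\mathbb{Z})\cong \mathbb{Z}$) an induction on $n$ gives $\Gamma(\mathbb{Z}^n)\cong \mathbb{Z}^{n(n+1)/2}$. Substituting into the splitting above produces the claimed isomorphism. No real obstacle arises: all the work was done in Theorem~\ref{t2}, and the only point requiring a moment of care is recalling that the commutator subgroup of a free group is free, which is what activates both the hypothesis $H_2((F_n)')=0$ and the splitting hypothesis.
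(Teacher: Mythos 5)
Your proposal is correct and follows exactly the route the paper intends: the corollary is stated as an immediate consequence of Theorem~\ref{t2}, and your verification of its hypotheses (freeness of $F_n$ and of $(F_n)'$ giving the homological vanishing and the splitting condition) together with the computation $\Gamma(\mathbb{Z}^n)\cong\mathbb{Z}^{n(n+1)/2}$ from \secref{section2} is precisely the argument left implicit in the text.
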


\section{Groups with one defining relation and knot groups}

Let $G$ be a group with one defining relation:
$$
G=\left\langle \, X \, \| \, r=1 \, \right\rangle,
$$
where $r \not\in F'$, $F=\left\langle \, X \,  \right\rangle$.
Then   $H_k(G)=0$, $k \geq 2$ (see \cite[p. 49]{Br}).
Hence, there exists the following short exact sequence:
$$
0 \longrightarrow \Gamma(G^{ab}) \longrightarrow G\otimes G \longrightarrow G' \longrightarrow 1.
$$
If $G^{ab}$ does not have torsion, then   $G^{ab}$ is a free abelian group
and  $\Gamma(G^{ab})$ is a free abelian group. Then, if $H^2(G') = 0$, then $H^2(G', \Gamma(G^{ab})) = 0$, which follows from the decomposition
$$
H^k(S, A \oplus B) = H^k(S, A) \oplus H^k(S, B)
$$
for every group $S$ and all $S$-modules $A$ and $B$.

From Theorem \ref{t2} follows

\begin{prop}
Let $G$ be a group with one defining relation:
$$
G=\left\langle \, X \, \| \, r=1 \, \right\rangle,
$$
where $r \not\in F'$, $F=\left\langle \, X \,  \right\rangle$ such that $H^2(G') = 0$. If one from the following conditions hold:

1) $G^{ab}$ does not have torsion;

2) $G' / G''$ is a free abelian group.

Then $G \otimes G = \Gamma (G^{ab}) \times G'$. If moreover $G$ is finitely generated and $G$ is linear, then $G \otimes G$ is linear.
\end{prop}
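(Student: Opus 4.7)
The plan is to reduce the statement to Theorem \ref{t2} using the one-relator homological vanishing recalled at the start of this section. For a one-relator group $G = \langle X \| r \rangle$ with $r \notin F'$, the cited fact gives $H_k(G) = 0$ for $k \geq 2$, so in particular $H_2(G) = H_3(G) = 0$. Feeding these zeros into the main diagram from Section \ref{section2} collapses it into the central short exact extension
$$0 \longrightarrow \Gamma(G^{ab}) \longrightarrow G \otimes G \longrightarrow G' \longrightarrow 1$$
already displayed in the paragraph above the proposition. Once a splitting is exhibited, centrality of $\Gamma(G^{ab})$ upgrades the split extension to a direct product.

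I will then treat the two alternative hypotheses separately. Under (1), $G^{ab}$ is torsion-free, so by items (a) and (c) of the discussion of $\Gamma$ on abelian groups in Section \ref{section2}, $\Gamma(G^{ab})$ is itself a direct sum of copies of $\mathbb{Z}$. The additivity formula $H^k(S, A \oplus B) = H^k(S, A) \oplus H^k(S, B)$ displayed before the statement then propagates the assumed $H^2(G', \mathbb{Z}) = 0$ to $H^2(G', \Gamma(G^{ab})) = 0$, which is condition (1) of Theorem \ref{t2}. Under (2), $G'/G''$ is free abelian, which is verbatim condition (3) of Theorem \ref{t2}. In either case, Theorem \ref{t2} supplies the required isomorphism $G \otimes G \cong \Gamma(G^{ab}) \times G'$.

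For the linearity conclusion, if $G$ is finitely generated then $G^{ab}$ is a finitely generated abelian group, and hence so is $\Gamma(G^{ab})$; finitely generated abelian groups are linear by the Malcev criterion recalled above. The derived subgroup $G' \le G$ is linear as a subgroup of the linear group $G$, and a direct product of two linear groups is linear. The delicate point I expect to revisit is the interaction between the hypothesis $H^2(G') = 0$ of the proposition and the hypothesis $H_2(G') = 0$ appearing in Theorem \ref{t2}: under (1) this is bypassed by the direct $H^2$-propagation, but under (2) some care is warranted since $H^2(G', \mathbb{Z}) = 0$ together with $G'/G''$ free only controls the free part of $H_2(G')$ via universal coefficients, so one should check that the exact statement of Theorem \ref{t2} in force here is indeed triggered.
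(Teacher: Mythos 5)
Your proposal matches the paper's own argument almost verbatim: the paper derives the proposition from Theorem~\ref{t2} by invoking $H_k(G)=0$ for $k\geq 2$ for one-relator groups with $r\notin F'$ to get the central extension $0\to\Gamma(G^{ab})\to G\otimes G\to G'\to 1$, handles case (1) by exactly the same additivity of $H^2$ in the coefficient module applied to the free abelian group $\Gamma(G^{ab})$, reads case (2) off as condition (3) of Theorem~\ref{t2}, and concludes linearity from $\Gamma(G^{ab})$ being finitely generated abelian and $G'$ being a subgroup of the linear group $G$. The tension you flag between the proposition's hypothesis $H^2(G')=0$ and the hypothesis $H_2(G')=0$ in Theorem~\ref{t2} is genuine and is not resolved in the paper either: in case (1) it is indeed bypassed by the direct $H^2$-propagation, but in case (2) the paper simply cites Theorem~\ref{t2} without verifying $H_2(G')=0$, and $H^2(G',\mathbb{Z})=0$ together with $G'/G''$ free abelian only forces $\mathrm{Hom}(H_2(G'),\mathbb{Z})=0$, which does not rule out a nonzero $\mathrm{Hom}(H_2(G'),\Gamma(G^{ab}))$ when $\Gamma(G^{ab})$ has torsion.
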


\medskip

 It is well known that if $K$ is a tame knot in 3-sphere $\mathbb{S}^3$ and $G_K = \pi_1 (\mathbb{S}^3 \setminus K)$ its group, then $H_n (G_K) = 0$ for $n >1$ (see, for example \cite[p. 5]{Kaw}).  Recall that a knot $K$  is called {\it fibered}  if there is a 1-parameter family $F_t$ of Seifert surfaces for $K$, where the parameter $t$ runs through the points of the unit circle $S^{1}$, such that if $s$ is not equal to $t$ then the intersection of $F_{s}$ and $F_t$ is exactly $K$.
The commutator subgroup $G_K'$ for the fibered knot $K$ is a free group of finite rank \cite{CZ} and $G_K$ is linear \cite{Ash}.

\begin{prop}
Let $K$ be a tame fibered knot in 3-sphere $\mathbb{S}^3$, then $G_K \otimes G_K = G_K' \times \mathbb{Z}$ and has a faithful linear representation into $GL_2(\mathbb{Z} [t, t^{-1}])$.
\end{prop}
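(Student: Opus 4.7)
The plan is to apply Theorem \ref{t2} to $G = G_K$ and then construct the faithful linear representation by hand from the resulting direct-product decomposition.

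First I would verify the hypotheses of Theorem \ref{t2}. The paragraph preceding the statement already recalls that for a tame knot $H_n(G_K)=0$ for $n>1$, so in particular $H_2(G_K)=H_3(G_K)=0$. Since $K$ is fibered, $G_K'$ is a free group of finite rank; hence $H_2(G_K')=0$, and $G_K'/G_K''$, being the abelianization of a free group, is free abelian, which is exactly condition (3) of Theorem \ref{t2}. Applying the theorem yields
$$
G_K \otimes G_K \;\cong\; \Gamma(G_K^{ab}) \times G_K'.
$$

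Next I would identify each factor. The abelianization of any knot group is infinite cyclic (generated by a meridian), so $G_K^{ab}\cong \mathbb{Z}$; by item (c) in Section \ref{section2}, $\Gamma(\mathbb{Z})\cong \mathbb{Z}$. This yields the stated decomposition $G_K\otimes G_K \cong G_K' \times \mathbb{Z}$.

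For the faithful representation into $GL_2(\mathbb{Z}[t,t^{-1}])$ I would combine two ingredients. Any free group of finite rank embeds in $SL_2(\mathbb{Z})$ via the classical Sanov construction, so I pick such an embedding $\rho_0 : G_K' \hookrightarrow SL_2(\mathbb{Z}) \subset GL_2(\mathbb{Z}[t,t^{-1}])$. Then I send a generator of the $\mathbb{Z}$-factor to the central scalar $t I$. Since $tI$ commutes with every matrix, the assignment $(g,k)\mapsto \rho_0(g)\cdot t^k I$ defines a homomorphism on the direct product. Faithfulness is immediate from determinants: if $\rho_0(g)\cdot t^k I = I$, then taking determinants gives $t^{2k}=1$, which forces $k=0$, after which $\rho_0(g)=I$ forces $g=1$. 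I expect no serious obstacle: the decomposition step is a direct invocation of Theorem \ref{t2}, and the only nonautomatic input in the linearity step is the classical embedding of a finite-rank free group into $SL_2(\mathbb{Z})$, which is standard.
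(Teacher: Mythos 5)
Your proposal is correct and follows essentially the same route as the paper: invoke Theorem \ref{t2} (the paper is terser about checking its hypotheses, but your verification that $H_2(G_K)=H_3(G_K)=H_2(G_K')=0$ and that $G_K'/G_K''$ is free abelian is exactly what is needed), identify $\Gamma(G_K^{ab})=\Gamma(\mathbb{Z})=\mathbb{Z}$, and then combine a Sanov embedding of the free group $G_K'$ into $SL_2(\mathbb{Z})$ with the central scalar $tI$ for the $\mathbb{Z}$-factor. Your determinant argument for faithfulness is a nice explicit touch that the paper leaves implicit.
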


\begin{proof}
It is well known that $G_K^{ab} = \mathbb{Z}$ and then $\Gamma (G_K^{ab})  = \mathbb{Z}$.
From Theorem \ref{t2} it follows that $G_K \otimes G_K = \mathbb{Z}  \times G_K'$.

To construct a linear representation, use the fact that $G_K'$ is a free group of finite rank and by Sanov's theorem \cite[Chapter 5]{KM} it has a faithful linear representation into $SL_2(\mathbb{Z}) \leq GL_2(\mathbb{Z} [t, t^{-1}])$. Define a linear representation of $G_K^{ab} = \mathbb{Z} = \langle \gamma \rangle$ into $GL_2(\mathbb{Z} [t, t^{-1}])$ by the rule
$$
\gamma \longmapsto \left(
                     \begin{array}{cc}
                       t & 0 \\
                       0 & t \\
                     \end{array}
                   \right).
$$
Since the image of $\gamma$ is a scalar matrix, i.e. lies in the center of $GL_2(\mathbb{Z} [t, t^{-1}])$, we constructed a faithful linear representation of
$G_K \otimes G_K$.
\end{proof}

\begin{example}
1) The braid group $B_3$ on 3 strings has presentation
$$
B_3 = \langle \sigma_1, \sigma_2 ~||~\sigma_1 \sigma_2 \sigma_1 = \sigma_2 \sigma_1 \sigma_2 \rangle.
$$
and is the group of trefoil knot. The commutator subgroup $B_3'$ is a free group of rank 2. Hence  the tensor square $B_3 \otimes B_3 = \mathbb{Z} \times F_2$ has a faithful linear representation into $GL_2(\mathbb{Z} [t, t^{-1}])$.

2) It is known that the group of the figure eight knot has a presentation
$$
G = \langle x, y ~||~y x^{-1} y x y^{-1} = x^{-1} y x y^{-1} x \rangle
$$
and is a fibered knot. Hence  the tensor square $G \otimes G = \mathbb{Z} \times G'$ has a faithful linear representation into $GL_2(\mathbb{Z} [t, t^{-1}])$.
\end{example}

In the first example we shown  that $B_3 \otimes B_3 = \mathbb{Z} \times F_2$. On the other side $B_3$ contains the pure braid group $P_3$, which is normal in $B_3$, has index 6 and is the direct product of the center, which is isomorphic to $\mathbb{Z}$, and a free group of rank 2. Hence, $B_3 \otimes B_3$ is isomorphic to $P_3$ and we proved

\begin{prop}
There is a non-trivial non-abelian group $G$ such that the tensor square $G \otimes G$ is isomorphic to a proper subgroup of $G$.
\end{prop}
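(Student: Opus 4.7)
The strategy is essentially dictated by the paragraph preceding the statement: I would simply take $G = B_3$, the braid group on three strings, and exhibit an explicit proper subgroup of $B_3$ isomorphic to $B_3 \otimes B_3$.

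First I would invoke the previous example to record that $B_3 \otimes B_3 \cong \mathbb{Z} \times F_2$, where $F_2$ is free of rank $2$. Next I would recall the standard structure of $B_3$: the pure braid group $P_3 \triangleleft B_3$ has index $6$ (it is the kernel of the canonical surjection $B_3 \to S_3$ sending each $\sigma_i$ to a transposition), and it is classically known that $P_3 \cong Z(B_3) \times F_2$, with $Z(B_3) \cong \mathbb{Z}$ generated by $(\sigma_1 \sigma_2)^3$. Combining these facts gives $P_3 \cong \mathbb{Z} \times F_2 \cong B_3 \otimes B_3$, and $P_3$ is a proper subgroup of $B_3$ (it has index $6$), so $B_3$ is the required non-abelian group.

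To wrap up, I would observe that $B_3$ is obviously non-abelian (and non-trivial), for instance because it surjects onto the non-abelian group $S_3$. There isn't really an obstacle here: the substantive content (the computation of $B_3 \otimes B_3$ and the internal structure of $P_3$) is already available, and the proposition is essentially a repackaging of those two facts together with the embedding $P_3 \hookrightarrow B_3$. The only thing one should take care to state clearly is the isomorphism $P_3 \cong \mathbb{Z} \times F_2$, for which a reference to a standard text on braid groups suffices.
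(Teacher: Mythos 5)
Your proposal is correct and is essentially identical to the paper's own argument: the paper likewise takes $G = B_3$, uses $B_3 \otimes B_3 \cong \mathbb{Z} \times F_2$ from the trefoil example, and identifies this with the index-$6$ pure braid group $P_3 \cong Z(B_3) \times F_2$. No differences worth noting.
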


\begin{question}
1) Is it true that $B_n \otimes B_n$, $n > 3$, is linear?

2) Is it true that for arbitrary tame knot $K$ the group $G(K) \otimes G(K)$ is linear?

\end{question}

\section{On the linearity of the Peiffer product} Recall the definition of the
Peiffer product. Given $G$ and $H$ acting compatibly on each other, in \cite{W} the {\it Peiffer product} $G \bowtie H$  was defined their as the quotient of the free product $G * H$ by the normal closure $K$ of all elements of the form
$$
h^{-1} g^{-1} h g^h ~\mbox{or}~g^{-1} h^{-1} g h^g
$$
where $g \in G$ and $h \in H$.
Whitehead \cite{W} posed a question on the asphericity of subcomplexes of aspherical 2-complexes and reformulated it as part of the wider problem of finding conditions under which the groups $G$ and $H$ are embedded in $G \bowtie H$.

In \cite{GH} it was proved that if  $\varphi : G*H \to G \bowtie H$, then modulo $K = Ker (\varphi)$, $h g \equiv g h^g$, so that every element of $G \bowtie H$ can be written as $\varphi(g) \varphi(h)$ for suitable $g$, $h$. Denote $\varphi(g) \varphi(h)$ as $\langle g, h \rangle$. The relations
$$
\langle g, h \rangle \langle g_1, h_1 \rangle = \langle g g_1, h^{g_1} h_1 \rangle = \langle g g_1^{h^{-1}}, h h_1 \rangle
$$
are defining relations for $G \bowtie H$ on the generators $\langle g, h \rangle$ and so $G \bowtie H$ is a homomorphic image  of both the semidirect products
$G \ltimes H$ and $G \rtimes  H$. The group $G \bowtie H$ is obtained from $G \ltimes H$ (or from $G \rtimes  H$) by imposing the relations
$$
(g^{-1} g^h, 1) = (1, h^{-g} h).
$$

If $G$ and $H$ act on one another trivially, then $G \bowtie H$ is just the direct product $G \times H$ and $K = G \square H$, where $G \square H$ is the Cartesian subgroup of $G * H$ (the kernel of the
canonical homomorphism $G * H \longrightarrow G \times H$.

From \cite[Proposition 2.1]{GH} follows
$$
G \bowtie G \cong G^{ab} \times G^{ab}.
$$
Using this isomorphism one can prove

 \begin{prop}
Let $G$ be a linear group and $G$ is finitely generated or $G = G'$, then $G \bowtie G$ is linear.
 \end{prop}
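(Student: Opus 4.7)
The plan is to combine the isomorphism $G \bowtie G \cong G^{ab} \times G^{ab}$ stated just above (from \cite[Proposition~2.1]{GH}) with the elementary fact that a direct product of two linear groups is linear by block-diagonal embedding. After this reduction the task becomes showing that $G^{ab}$ itself is linear under either of the two hypotheses; then taking two copies of a faithful linear representation and placing them on the diagonal of a $2n \times 2n$ matrix yields a faithful linear representation of $G \bowtie G$.

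First I would dispose of the perfect case: if $G = G'$, then $G^{ab}$ is the trivial group, so $G \bowtie G$ is trivial and hence linear. In the finitely generated case, $G^{ab}$ is a finitely generated abelian group, hence of the form $\mathbb{Z}^r \oplus T$ with $T$ finite by the structure theorem. Such a group admits an obvious faithful representation in $GL_{r+|T|}(\mathbb{Q})$ (for example, with $\mathbb{Z}^r$ embedded via diagonal matrices with algebraically independent non-root-of-unity entries, and $T$ via the regular permutation representation). Equivalently, one may invoke the Malcev criterion recalled in Section~3: the torsion subgroup $\tau(G^{ab}) = T$ has finite rank and finite exponent, so the criterion is trivially satisfied.

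The point worth emphasizing is that no deeper input is required beyond the stated isomorphism, and there is no serious obstacle in the argument. The two-part hypothesis ``$G$ finitely generated or $G = G'$'' is inserted for the sole purpose of guaranteeing that $G^{ab}$ is linear: the examples mentioned in the acknowledgements (due to J.~O.~Button and O.~V.~Bryukhanov) show that for a general linear group $G$ the abelianization $G^{ab}$ need not be linear, so without one of these two hypotheses the reduction step would already fail. Thus the main content of the proposition lies not in any hard step of the proof but in the observation that the Peiffer square depends only on $G^{ab}$, which collapses the whole linearity question to the abelian case.
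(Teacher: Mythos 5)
Your proposal is correct and follows exactly the route the paper intends: reduce via $G \bowtie G \cong G^{ab} \times G^{ab}$ and observe that $G^{ab}$ is linear when $G$ is finitely generated (finitely generated abelian) or perfect (trivial). The paper leaves the argument to the reader after stating the isomorphism, and your filled-in details, including the remark that the hypotheses are needed precisely because of the Button/Bryukhanov examples, match the paper's own discussion.
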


From this proposition it follows that if  $G = SL_n(\mathbb{Q})$, $n \geq 2$, then $G \bowtie G$ is linear. On the other side, we know that
$SL_n(\mathbb{Q}) \otimes SL_n(\mathbb{Q})$ and $SL_n(\mathbb{Q}) \wedge SL_n(\mathbb{Q})$ are not linear for $n \geq 3$.

Note that this proposition is not true for  arbitrary linear group $G$ since there are linear groups with nonlinear abelization.

\begin{example}
1) (O. V. Bryukhanov) Let 
$G = \overset{\infty}{\underset{i=2}{\ast}} \mathbb{Z}_i$
 be the free product of cyclic groups. Then $G$ is linear as the free product of linear groups. On the other side, by Malcev criteria (see Section 3) the abelization $G^{ab}$ is not linear.
 
2) (J. O. Button) 
Take the set of matrices 
$$
A_i = \left(  \begin{array}{cc}
                       1 & x^i \\
                       0 & 1 \\
                     \end{array}
                   \right) \in SL_2(\mathbb{Z}[x]),~~~i \in \mathbb{N}. 
$$
Then $A = \langle A_i~|~i \in \mathbb{N} \rangle$ is a free abelian group of countable rank.
Put
$$
B = \left(
                     \begin{array}{cc}
                       3 & 0 \\
                       0 & 1 \\
                     \end{array}
                   \right) \in GL_2(\mathbb{Q}). 
$$
It is easily to check that these matrices satisfy the relations
$$
B A_i B^{-1} = A_i^3, ~~~i \in \mathbb{N}.
$$
Hence the group generated by $A_i$ and $B$ has the presentation
$$
G_2 = \langle A_i,  i \in \mathbb{N}, B ~||~[A_i, A_j] = 1, B A_i B^{-1} = A_i^3, ~~~i, j \in \mathbb{N} \rangle,
$$
which is a subgroup of $GL_2 (\mathbb{Q}[x])$, but its abelization $G_2^{ab} \cong \bigoplus\limits_{i=1}^{\infty} \mathbb{Z}_2 \oplus \mathbb{Z}$ does not have a faithful linear representations over field of  characteristic $p \not= 2$.

Analogically, take the set of matrices
$$
C_i = \left(  \begin{array}{cc}
                       1 & y^i \\
                       0 & 1 \\
                     \end{array}
                   \right) \in SL_2(\mathbb{Z}[y]),~~~i \in \mathbb{N}.
$$
Then $C = \langle C_i~|~i \in \mathbb{N} \rangle$ is a free abelian group of countable rank.
Put
$$
D = \left(
                     \begin{array}{cc}
                       4 & 0 \\
                       0 & 1 \\
                     \end{array}
                   \right) \in GL_2(\mathbb{Q}).
$$
It is easily to check that these matrices satisfy the relations
$$
D C_i D^{-1} = C_i^4, ~~~i \in \mathbb{N}.
$$
Hence the group generated by $C_i$ and $D$ has the presentation
$$
G_3 = \langle C_i,  i \in \mathbb{N}, D ~||~[C_i, C_j] = 1, D C_i D^{-1} = C_i^4, ~~~i, j \in \mathbb{N} \rangle,
$$
which is a subgroup of $GL_2 (\mathbb{Q}[y])$, but its abelization $G_3^{ab} \cong \bigoplus\limits_{i=1}^{\infty} \mathbb{Z}_3 \oplus \mathbb{Z}$ does not have a faithful linear representations over field of  characteristic $p \not= 3$. 

Let us take $G = G_2 \oplus G_3$. Then it is metabelian and has a faithful linear representation in $GL_4 (\mathbb{Q}[x, y])$, but its abelization $G^{ab} \cong \bigoplus\limits_{i=1}^{\infty} \left( \mathbb{Z}_2 \oplus \mathbb{Z}_3 \right) \oplus \mathbb{Z} \oplus \mathbb{Z}$ is not linear.
\end{example}

It is evident that
the following short exact sequence holds
 $$
1 \longrightarrow 1 \times G' \longrightarrow G^{ab} \times G \longrightarrow G^{ab} \times G^{ab} \longrightarrow 1.
 $$
Since  $G^{ab} \times G \cong G \bowtie G$ we can add in the main diagram new terms.

\begin{prop}
The following diagram holds
$$
\begin{array}{ccccccccc}
       &             &                &                              & 0         &             & 0          &             &  \\
       &             &                &                              &\downarrow &             & \downarrow &             &  \\
H_3(G) & \rightarrow & \Gamma(G^{ab}) & \stackrel{\psi}{\rightarrow} & J_2(G)    & \rightarrow & H_2(G)     & \rightarrow & 1\\
\shortparallel &     & \shortparallel &                              &\downarrow &             & \downarrow &  &  \\
H_3(G) & \rightarrow & \Gamma(G^{ab}) & \stackrel{\psi}{\rightarrow} & G\otimes G& \rightarrow &G\wedge G& \rightarrow & 1\\
       &             &                &                              &\downarrow &             & \downarrow &             &  \\
       &             &                &                              &G \bowtie G &     =       &G \bowtie G &             &  \\
       &             &                &                              &\downarrow &             & \downarrow &             &  \\
       &             &                &                              &G^{ab} \times G^{ab} & = &G^{ab} \times G^{ab} & &= H_1(G) \times H_1(G)\\
       &             &                &                              &\downarrow &             & \downarrow &             &  \\
       &             &                &                              & 1         &             & 1          &             &  \\
\end{array}
$$
\end{prop}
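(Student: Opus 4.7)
The plan is to glue the short exact sequence $1 \to 1 \times G' \to G \bowtie G \to G^{ab} \times G^{ab} \to 1$ (displayed immediately above the proposition) onto the bottom of each of the two exact columns $J_2(G) \hookrightarrow G \otimes G \twoheadrightarrow G' \to 1$ and $H_2(G) \hookrightarrow G \wedge G \twoheadrightarrow G' \to 1$ of the main diagram from \secref{section2}. Concretely, I would post-compose the derived maps $G \otimes G \twoheadrightarrow G'$ and $G \wedge G \twoheadrightarrow G'$ with the embedding $G' = 1 \times G' \hookrightarrow G^{ab} \times G \cong G \bowtie G$, where the last isomorphism is furnished by \cite[Proposition 2.1]{GH}. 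The top two rows and the portion of the columns ending at $G'$ then require no further argument since they are literally the main diagram.

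Exactness of each extended column is then immediate. At $G \otimes G$ (respectively $G \wedge G$) the kernel of the new map into $G \bowtie G$ still coincides with $J_2(G)$ (respectively $H_2(G)$), since post-composing with an injection does not alter kernels. At $G \bowtie G$, the image of $G \otimes G$ (respectively $G \wedge G$) equals $1 \times G'$, which is precisely the kernel of the projection $G \bowtie G \twoheadrightarrow G^{ab} \times G^{ab}$; exactness at $G^{ab} \times G^{ab}$ is just the surjectivity of this projection. Commutativity of the two new bottom squares is automatic because their horizontal arrows are identities, and commutativity of the square linking $G \otimes G \to G \wedge G$ with $G \bowtie G = G \bowtie G$ follows from the already-known commutativity of the square $G \otimes G \twoheadrightarrow G' \twoheadleftarrow G \wedge G$ by post-composing both downward arrows with the same inclusion $G' \hookrightarrow G \bowtie G$.

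The only point requiring genuine care, and hence the main obstacle, is verifying that under the isomorphism $G \bowtie G \cong G^{ab} \times G$ of \cite[Proposition 2.1]{GH} the canonical image of (the appropriate copy of) $G$ inside $G \bowtie G$ is really the factor $\{1\} \times G$, so that the commutator subgroup $G'$ sits in $G \bowtie G$ as $1 \times G'$ and the derived map $\kappa$ lands in this copy; otherwise the bottom short exact sequence would not be the one displayed. Once this identification is pinned down from the explicit description of $G \bowtie G$ recalled in \secref{section2}, the proposition reduces to a formal diagram chase, with no new homological input beyond the main diagram.
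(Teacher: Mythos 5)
Your proposal is correct and matches the paper's (essentially unstated) argument: the paper merely records the exact sequence $1 \to 1 \times G' \to G^{ab} \times G \to G^{ab} \times G^{ab} \to 1$ together with the identification $G \bowtie G \cong G^{ab} \times G$ and splices it onto the bottom of the two columns of the main diagram, exactly as you describe. The identification you flag as the one delicate point is indeed the only real content, and it is settled by writing the isomorphism explicitly as $\langle g, h\rangle \mapsto (gG', gh)$, under which both canonical copies of $G'$ (and hence the image of $\kappa$) land in $1 \times G'$.
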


\section{Faithful linear representations}

In the paper \cite{BFM} it was proved:

1) If $F_n$ is the  free group of rank  $n$, then
$$
F_n\otimes F_n \cong \mathbb{Z}^{n(n+1)/2} \times (F_n)'.
$$

2) If $N_{n,c}=F_n/\gamma_c F_n$ is the free nilpotent group of rank $n > 1$ and class  $c \geq 1$,
then
$$
N_{n,c}\otimes N_{n,c} \cong \mathbb{Z}^{n(n+1)/2} \times (N_{n,c+1})'.
$$

\begin{prop}
Let $G$ be a free countable group. Then the exterior square $G \wedge G$ has a faithful representation into  $SL_2(\mathbb{Z})$ and the tensor square $G \otimes G$ has a faithful representation into  $GL_2(\mathbb{C})$.
\end{prop}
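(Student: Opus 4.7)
The plan is to treat the two claims separately, using Theorem~\ref{t2} for the second and Sanov's theorem for both.

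For $G\wedge G$: recall from the discussion in Section~\ref{section2} that for any free $G$ one has $G\wedge G\cong G'$. By Nielsen--Schreier $G'$ is free, and since $G$ is countable $G'$ has at most countable rank. Any free group of countable rank embeds in $F_2$ --- for instance, the commutator subgroup $F_2'$ is itself free of countably infinite rank, and every countable free group is then a subgroup of it by Nielsen--Schreier. Sanov's theorem supplies a faithful embedding of $F_2$ into $SL_2(\mathbb{Z})$ whose image lies in the principal congruence subgroup $\Gamma(2)$. Composing gives the required faithful representation $G\wedge G\hookrightarrow SL_2(\mathbb{Z})$.

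For $G\otimes G$: I would first check that Theorem~\ref{t2} applies. Since both $G$ and $G'$ are free we have $H_2(G)=H_3(G)=H_2(G')=0$ and $G'/G''$ is free abelian, so
$$
G\otimes G \;\cong\; \Gamma(G^{ab})\times G'.
$$
Because $G^{ab}$ is free abelian of countable rank, iterating the decomposition $\Gamma(A\times B)\cong \Gamma(A)\times\Gamma(B)\times(A\otimes_{\mathbb{Z}}B)$ from Section~\ref{section2} shows that $\Gamma(G^{ab})$ is again free abelian of countable rank. I would represent the two factors separately: embed $G'$ into $\Gamma(2)\subset SL_2(\mathbb{Z})\subset GL_2(\mathbb{C})$ by the previous paragraph, and send a chosen $\mathbb{Z}$-basis $\gamma_1,\gamma_2,\ldots$ of $\Gamma(G^{ab})$ to scalar matrices $\lambda_i I$, where $\lambda_1,\lambda_2,\ldots\in\mathbb{C}^\times$ are multiplicatively independent (e.g.\ algebraically independent transcendentals; these exist because $\mathbb{C}$ has uncountable transcendence degree over $\mathbb{Q}$). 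Since scalar matrices are central in $GL_2(\mathbb{C})$, these two homomorphisms assemble into a well-defined representation of the direct product.

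The main obstacle is verifying that this combined representation is faithful. Suppose $(x,y)\in\Gamma(G^{ab})\times G'$ maps to the identity. Then the image of $y$ equals the scalar matrix $\rho_1(x)^{-1}=\mu I$ for some $\mu\in\mathbb{C}^\times$. Since the image of $y$ lies in $\Gamma(2)$, it is an integer matrix of determinant $1$; the only integer scalar matrices of determinant $1$ are $\pm I$, and $-I\notin\Gamma(2)$, so $\mu=1$ and $\rho_2(y)=I$. Faithfulness of the Sanov representation gives $y=1$. Then $\rho_1(x)=I$ translates into a relation $\prod\lambda_i^{n_i}=1$, and multiplicative independence of the $\lambda_i$ forces every $n_i=0$, whence $x=1$. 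This yields the faithful embedding $G\otimes G\hookrightarrow GL_2(\mathbb{C})$.
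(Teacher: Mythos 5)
Your argument follows the paper's proof essentially verbatim: $G\wedge G\cong G'$ is handled by Sanov's theorem, and $G\otimes G\cong\Gamma(G^{ab})\times G'$ with the free abelian factor sent to scalar matrices having algebraically independent transcendental entries; you are in fact more careful than the paper, which simply asserts that $\left\langle \varphi G', T\right\rangle\cong \varphi G'\times T$ without checking faithfulness. One correction to that check: since $-1\equiv 1\pmod 2$, the matrix $-I$ \emph{does} lie in $\Gamma(2)$, so your stated reason fails; the right reason $\rho_2(y)=-I$ is impossible is that the Sanov image is free, hence torsion-free, so $\rho_2(y)=-I$ would force $y^2=1$ and then $y=1$ in the torsion-free group $G'$, contradicting $\rho_2(y)\neq I$. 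With that one-line repair the proof is complete.
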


\begin{proof}
As was proven in \cite{BL1}, for the free group  $G$ there are  isomorphisms
$$
G \wedge G \cong   G',~~~G \otimes G \cong   \Gamma(G^{ab}) \times G'.
$$
Since $G$ is free, its commutator subgroup   $G'$ is free.
Hence, by the Sanov result \cite[Chapter 5]{KM} there is a faithful representation of $G'$ into $SL_2(\mathbb{Z})$ and the first part of the proposition holds.

Further,  $\Gamma(G^{ab})$ is a free abelian group.
Let $a_k$, $k\in I$ be its free generators.
Take transcendental elements  $t_k$, $k\in I$
in the field  $\mathbb{C}$, which are algebraically independent over $\mathbb{Q}$.
Then the matrix group
$$
T= \left\langle \,
\left(%
\begin{array}{cc}
  t_k & 0 \\
  0   & t_k \\
\end{array}%
\right) \,\, \| \,\, k\in I \,
   \right\rangle
$$
is isomorphic to the group  $\Gamma(G^{ab})$. If $\varphi : G' \longrightarrow GL_2(\mathbb{Z})$
is an embedding, then
$$
\left\langle \,
\varphi G',\, T  \,
   \right\rangle \cong
\varphi G' \times  T.
$$
Hence, the group  $G \otimes G$ has a faithful representation in the matrix group over the ring
$\mathbb{Z}[t_k^{\pm 1}, k\in I]$.

If $G = F_{\infty}$ is countably generated then it has a faithful representation into $SL_2(\mathbb{Z})$. To prove that $\Gamma (F_{\infty}^{ab})$ is linear we use the following property
$$
\Gamma (F_{\infty}^{ab}) = \Gamma(\lim F_n^{ab}) = \lim (\Gamma F_n^{ab}).
$$
\end{proof}

For the finitely generated free groups from this theorem follows

\begin{corollary}
The tensor square  $F_n\otimes F_n$ has a faithful representation into
$GL_2(\mathbb{Z}[t_1^{\pm 1},\ldots, t_m^{\pm 1}])$, where $m=\frac{n(n+1)}{2}$.
\end{corollary}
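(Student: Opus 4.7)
The plan is to specialize the construction in the preceding proposition to the case when the free group $F_n$ is of finite rank $n$, so that the indexing set $I$ becomes finite of size exactly $m = n(n+1)/2$. By the result of \cite{BFM} cited at the start of Section~5, we have
$$
F_n \otimes F_n \cong \mathbb{Z}^{n(n+1)/2} \times (F_n)',
$$
so it suffices to embed each factor into $GL_2(\mathbb{Z}[t_1^{\pm 1},\ldots,t_m^{\pm 1}])$ in a way that the two images commute and the product remains faithful.

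First I would handle the commutator factor. The derived subgroup $(F_n)'$ is a free group (of countable rank, but this will not matter). By Sanov's theorem it embeds faithfully into $SL_2(\mathbb{Z})$, and $SL_2(\mathbb{Z})$ sits inside $GL_2(\mathbb{Z}[t_1^{\pm 1},\ldots,t_m^{\pm 1}])$ via constant matrices. Call this embedding $\varphi$.

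Next I would handle the abelian factor $\mathbb{Z}^m = \Gamma(F_n^{ab})$. Fix a basis $e_1,\ldots,e_m$ and send $e_k$ to the scalar matrix $t_k I$, i.e.
$$
e_k \longmapsto \begin{pmatrix} t_k & 0 \\ 0 & t_k \end{pmatrix}.
$$
Since the variables $t_k$ are algebraically independent, distinct monomials give distinct scalar matrices, so this map is a faithful representation of $\mathbb{Z}^m$. Moreover, each such scalar matrix lies in the center of $GL_2(\mathbb{Z}[t_1^{\pm 1},\ldots,t_m^{\pm 1}])$, hence commutes with every matrix in $\varphi((F_n)')$.

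Finally, to see that the combined representation of $\mathbb{Z}^m \times (F_n)'$ is faithful, suppose an element $(a,g)$ with $a = (a_1,\ldots,a_m) \in \mathbb{Z}^m$ and $g \in (F_n)'$ maps to the identity matrix. Its image is $t_1^{a_1}\cdots t_m^{a_m}\,\varphi(g)$; taking determinants and using that $\varphi(g) \in SL_2(\mathbb{Z})$ yields $t_1^{2a_1}\cdots t_m^{2a_m} = 1$ in the Laurent polynomial ring, forcing $a = 0$, and then $\varphi(g) = I$ forces $g = 1$ by faithfulness of $\varphi$. The main point to watch is the algebraic independence of the $t_k$ together with the centrality of scalar matrices, which are both clear; there is no substantive obstacle, and the corollary is really just the finite-index specialization of the proposition.
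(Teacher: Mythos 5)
Your proposal is correct and follows essentially the same route as the paper: decompose $F_n\otimes F_n\cong \mathbb{Z}^{n(n+1)/2}\times (F_n)'$, embed $(F_n)'$ into $SL_2(\mathbb{Z})$ by Sanov's theorem, and send the free abelian generators to the scalar matrices $\operatorname{diag}(t_k,t_k)$, which are central. Your explicit determinant argument for faithfulness of the combined representation is a welcome detail that the paper leaves implicit.
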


For the free nilpotent groups we can prove

\begin{prop}
There is a faithful representation
$$
N_{n,c} \otimes N_{n,c} \longrightarrow T_{c+2}(\mathbb{C})
$$
into the group of triangular matrices $T_{c+2}(\mathbb{C})$.
\end{prop}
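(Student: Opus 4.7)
The plan is to apply the isomorphism $N_{n,c}\otimes N_{n,c}\cong \mathbb{Z}^{n(n+1)/2}\times (N_{n,c+1})'$ recalled from \cite{BFM} at the beginning of this section, and to assemble the representation from two pieces: a central representation of the free abelian factor, and a unipotent representation of the commutator subgroup. Inside $T_{c+2}(\mathbb{C})$ the scalar matrices $\mathbb{C}^{\ast}\cdot I$ centralize everything and meet $UT_{c+2}(\mathbb{C})$ only in the identity, so any two faithful representations into these two subgroups automatically glue to a faithful representation of the direct product.

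For the free abelian factor I would choose $m:=n(n+1)/2$ algebraically independent transcendentals $t_1,\dots,t_m\in\mathbb{C}$ over $\mathbb{Q}$ and send the $k$-th standard generator of $\mathbb{Z}^m$ to the scalar matrix $t_kI_{c+2}$. Algebraic independence of the $t_k$ makes this homomorphism injective and places its image in the centre of $T_{c+2}(\mathbb{C})$.

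For the commutator subgroup I would exploit the fact that $UT_{c+2}(\mathbb{C})$ is itself nilpotent of class $c+1$, so that any assignment $x_i\mapsto g_i\in UT_{c+2}(\mathbb{C})$ of the $n$ free generators of $F_n$ automatically extends to a well-defined homomorphism $\bar\phi\colon N_{n,c+1}\to UT_{c+2}(\mathbb{C})$. Choosing $g_i=I+X_i$ generically, with an independent transcendental (disjoint from the $t_k$) assigned to each strictly upper-triangular entry of each $X_i$, I would write an arbitrary element of $N_{n,c+1}$ in its Hall basic-commutator normal form and expand its image in $UT_{c+2}$; each matrix entry becomes a polynomial in the transcendentals. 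The task is to show that for any nontrivial word at least one of these polynomial entries is nonzero, and concretely that the leading terms (in the grading by superdiagonal distance) of the images of distinct basic commutators of the same weight are $\mathbb{Q}$-linearly independent. This is possible in principle because although each weight-$k$ piece $\gamma_k(UT_{c+2})/\gamma_{k+1}(UT_{c+2})$ has bounded $\mathbb{C}$-dimension, it has continuum $\mathbb{Q}$-dimension, so transcendental entries can separate arbitrarily many basic commutators. Restricting $\bar\phi$ to $(N_{n,c+1})'$ then yields the needed embedding into $UT_{c+2}(\mathbb{C})\subset T_{c+2}(\mathbb{C})$.

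Combining the two embeddings produces the sought homomorphism $N_{n,c}\otimes N_{n,c}\to T_{c+2}(\mathbb{C})$; its kernel consists of pairs $(v,u)$ for which a scalar matrix equals a unitriangular matrix, which forces the scalar to be $I$, hence $\prod t_k^{v_k}=1$ and so $v=0$ by algebraic independence, and then $u=1$ by the faithfulness from the third paragraph. The main obstacle I foresee is precisely the $\mathbb{Q}$-linear independence of leading terms required in the third paragraph: one has to choreograph the transcendentals so that distinct Hall basic commutators translate to distinct nonzero polynomial leading terms, which is cleanest to organize as a Magnus-type argument that matches each basic commutator of weight $k$ to a distinguished monomial landing in the $(1,k+1)$-superdiagonal of $UT_{c+2}$.
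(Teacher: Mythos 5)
Your overall architecture is exactly the paper's: use $N_{n,c}\otimes N_{n,c}\cong \mathbb{Z}^{n(n+1)/2}\times (N_{n,c+1})'$, send the free abelian factor to scalar matrices with algebraically independent transcendental eigenvalues, send $N_{n,c+1}$ into $UT_{c+2}(\mathbb{C})$, restrict to the commutator subgroup, and glue the two pieces using the fact that scalars are central and meet the unitriangular group trivially. All of that is fine.

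The gap is the step you yourself flag as ``the main obstacle'': the faithfulness of the generic unipotent representation $\bar\phi\colon N_{n,c+1}\to UT_{c+2}(\mathbb{C})$. You assert that it is ``possible in principle'' to arrange that the leading terms of the images of distinct basic commutators of the same weight are $\mathbb{Q}$-linearly independent, on the grounds that each graded piece $\gamma_k(UT_{c+2})/\gamma_{k+1}(UT_{c+2})$ has continuum $\mathbb{Q}$-dimension. That is a plausibility argument, not a proof: the number of basic commutators of weight $k$ on $n$ generators grows like $n^k/k$, while the leading-term entries you get are specific polynomials in your transcendentals, and one must actually verify that no nontrivial $\mathbb{Z}$-linear combination of the resulting vectors of polynomials vanishes. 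This verification is the entire mathematical content of the proposition beyond bookkeeping, and it is precisely what the paper does not reprove but instead imports: by a theorem of Romanovskii (N.~S.~Romanovskii, \emph{Identity bases for certain matrix groups}, Algebra i Logika, 1971), the specific bidiagonal assignment sending $x_i$ to the unitriangular matrix with superdiagonal entries $t_{i1},\dots,t_{i,c+1}$ (the $t_{ij}$ algebraically independent over $\mathbb{Q}$) is already a faithful representation of the free nilpotent group $N_{n,c+1}$ into $UT_{c+2}(\mathbb{C})$. To complete your argument you should either cite that result (or an equivalent embedding theorem for free nilpotent groups into unitriangular groups over a field of characteristic zero) or carry out the Magnus-type independence computation in full; as written, the proof is not complete.
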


\begin{proof}
We noted that
$$
N_{n,c}\otimes N_{n,c} \cong \mathbb{Z}^{n(n+1)/2} \times (N_{n,c+1})'.
$$
Hence, we have to define faithful linear representations for $\mathbb{Z}^{n(n+1)/2} = \langle a_1, a_2, \ldots, a_m \rangle$, $m = n(n+1)/2$ and for $(N_{n,c+1})'$, where $N_{n,c+1} = \langle x_1, x_2, \ldots, x_n \rangle$. Let
$$
\tau_1, \tau_2, \ldots, \tau_m, t_{ij}, i = 1, 2, \ldots, n, j = 1, 2, \ldots, c+1,
$$
be  complex numbers which are algebraically independent over $\mathbb{Q}$. Define the following maps
$$
a_k \mapsto \tau_k E \in T_{c+2}(\mathbb{C}), ~~k = 1, 2, \ldots, m,
$$
which  defines a faithful representation of $\mathbb{Z}^{n(n+1)/2}$ into $T_{c+2}(\mathbb{C})$,
and
$$
x_i \mapsto \left(
              \begin{array}{ccccccc}
                1 & t_{i1} & 0 & 0 & \ldots & 0 & 0 \\
                0 & 1 & t_{i2} & 0 & \ldots & 0 & 0 \\
                \ldots & \ldots & \ldots & \ldots & \ldots & \ldots & \ldots \\
                0 & 0 & 0 & 0 & \ldots & 1 & t_{ic} \\
                0 & 0 & 0 & 0 & \ldots & 0 & 1 \\
              \end{array}
            \right) \in UT_{c+2} (\mathbb{C})
 ~~i = 1, 2, \ldots, n.
$$
As Romanovskii proved \cite{Rom} the map, defined on $x_i$ is a faithful representation of $N_{n,c+1}$ into $UT_{c+2} (\mathbb{C})$.  Hence we have a faithful representation of $\mathbb{Z}^{n(n+1)/2} \times N_{n,c+1}$ into $T_{c+2} (\mathbb{C})$. Since $\mathbb{Z}^{n(n+1)/2} \times (N_{n,c+1})'$ is a subgroup of
$\mathbb{Z}^{n(n+1)/2} \times N_{n,c+1}$, we have the needed representation.
\end{proof}

\end{document}